
\documentclass[11pt]{amsart}
\usepackage[all]{xy}
\usepackage{amssymb}
\usepackage{amsthm}
\usepackage[normalem]{ulem}
\usepackage{amsmath,mathtools}
\usepackage{amscd,enumitem}
\usepackage{caption}
\usepackage[justification=centering]{caption}
\usepackage{verbatim}
\usepackage{eurosym}
\usepackage{float}
\usepackage{color}
\usepackage{dcolumn}
\usepackage[mathscr]{eucal}
\usepackage[all]{xy}
\usepackage{bbm}
\usepackage[textheight=8.7in, textwidth=6.7in]{geometry}
\newtheorem*{conj*}{Conjecture}
\newtheorem{theorem}{Theorem}[section]

\theoremstyle{definition}
\newtheorem*{remark}{Remark}

\theoremstyle{plain}

\newtheorem{lemma}[theorem]{Lemma}

\newtheorem{prop}[theorem]{Proposition}

\newcommand{\Z}{\mathbb{Z}}

\newcommand{\mo}{\mathrm{mo}}

\newcommand{\N}{\mathbb{N}}

\DeclareMathOperator\Log{Log}
\numberwithin{equation}{section}

\newtheoremstyle{example}
  {\topsep}   
  {\topsep}   
  {\normalfont}  
  {0pt}       
  {\bfseries} 
  {.}         
  {5pt plus 1pt minus 1pt} 
  {}          
\theoremstyle{example}
\newtheorem*{example}{Example}

\def\({\left(}
\def\){\right)}

\def\Li{\mathrm{Li}_2}

\usepackage{centernot}

\begin{document}
\title[Hook length distributions]{Distributions of Hook lengths in integer partitions}
\author{Michael Griffin, Ken Ono,  and Wei-Lun Tsai}

\dedicatory{In memory of Christine Bessenrodt}

\address{Department of Mathematics, Brigham Young University, Provo, UT 84602}
\email{mjgriffin@math.byu.edu}

\address{Department of Mathematics, University of Virginia, Charlottesville, VA 22904}
\email{ko5wk@virginia.edu}

\address{Department of Mathematics, University of Virginia, Charlottesville, VA 22904}
\email{wt8zj@virginia.edu}

\keywords{Primary: Partitions, Secondary: Hook lengths}
\subjclass[2020]{11P82, 05A17}

\thanks{K.O. thanks  the Thomas Jefferson Fund and the NSF
(DMS-2002265 and DMS-2055118) for their support.}

\begin{abstract} Motivated by the many roles that hook lengths play in mathematics, we study
the distribution of the number of $t$-hooks in the partitions of $n$.
We prove that the limiting distribution is normal with
 mean $\mu_t(n)\sim \frac{\sqrt{6n}}{\pi}-\frac{t}{2}$ and variance $\sigma_t^2(n)\sim \frac{(\pi^2-6)\sqrt{6n}}{2\pi^3}.$ 
 Furthermore, we prove that the distribution of the number of hook lengths that are multiples of a fixed $t\geq 4$ in partitions of $n$ converge to a
 shifted Gamma distribution  with parameter $k=(t-1)/2$ and scale $\theta=\sqrt{2/(t-1)}.$
     \end{abstract}

\maketitle

\section{Introduction and statement of results}

The study of the statistical properties of partitions and their Young diagrams is rich with deep results. Works by  Pittel \cite{Pittel}, Szalay and Tur\'an \cite{ST1, ST2, ST3}, Temperley \cite{Temperley}, and Vershik \cite{Vershik} form a large body of work on questions related to the expected ``limiting shapes'' of Young diagrams (see the more recent paper by Bogachev \cite{Bogachev}  for more recent results in the field). In this paper we study the statistical properties of the hook lengths in Young diagrams of integer partitions. In this regard, there is recent work by Mutafchiev \cite{Mutafchiev} concerning the expected hook length of a randomly chosen cell. Here we study a different aspect.
To make this precise we first recall some notation.

A {\it partition} $\lambda=(\lambda_1, \lambda_2,\dots, \lambda_m)$ of $n$, denoted $\lambda \vdash n$,  is a nonincreasing sequence  of positive integers that sum to $n$. Its {\it Young diagram}  is the left-justified array of boxes where the row lengths are the parts.  
 The {\it hook} ${H}(k,j)$
of the cell in position $(k,j)$ is the set of cells below or to the right of that cell, including the cell itself, and the {\it hook length} $h(k,j):=(\lambda_k-k)+(\lambda'_j-j)+1,$ is the number of cells in the hook ${H}(k,j)$. 
Here $\lambda'_j$ is the number of boxes in column $j$, which is the same as the number of parts of the partition that are at least $j$.

\begin{center}
\includegraphics[height=18mm]{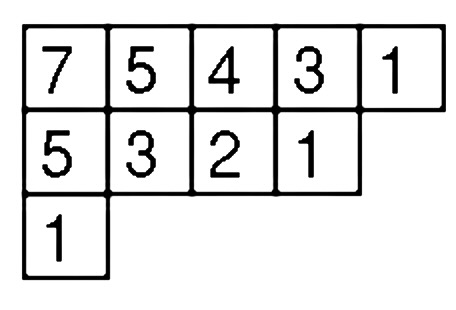}
\captionof{figure}{Hook lengths for $\lambda=(5,4,1)$}\label{figure0}
\end{center}
\smallskip

Multisets $\mathcal{H}(\lambda)$ of partition hook lengths  have many roles in combinatorics, number theory, and representation theory
(e.g. \cite{AndrewsEriksson, KerberJames, Sagan}). For instance, {\color{black} a standard Young tableaux for a partition $\lambda$ of $n$ is obtained by writing the numbers $1$ through $n$ in the boxes of the Young diagram so that each column and each row forms an increasing sequence.} The Frame-Robinson-Thrall hook length formula
$$
d_{\lambda}=\frac{n!}{\prod_{h\in \mathcal{H}(\lambda)} h}
$$
gives
the number of standard Young tableaux for  $\lambda.$ This
  is also the degree 
of  the canonical irreducible representation of the symmetric group $S_n$ associated to $\lambda.$  
As another important example, we have the famous Nekrasov-Okounkov  identity (see (6.12) of \cite{NekrasovOkounkov})\footnote{This formula was also obtained by Westbury (see Proposition 6.1 and 6.2 of \cite{Westbury}).} 
\begin{equation}\label{NO}
\sum_{\lambda} q^{|\lambda|} \prod_{h\in \mathcal{H}(\lambda)} \left(1-\frac{z}{h^2}\right)
=\prod_{n=1}^{\infty}\left(1-q^n\right)^{z-1},
\end{equation}
which arises in combinatorics, mathematical physics and the theory of modular forms.

In this paper, we study the numbers $Y_t(n)$ which count the $t$-hooks (i.e. hooks of length $t$) among all partitions of $n$.  For fixed $t$, we derive the limiting behavior of the sequence $\{Y_t(n)\}$ for $n\in \N$, and we give asymptotics for the accumulation function

\begin{equation}\label{DistFunc1}
D_t(k;n):= \frac{\# \left \{ \lambda \vdash n \ {\text {\rm with $\leq k$ many hook lengths of size $t$}}\right\}}{p(n)}.
\end{equation}

\begin{theorem}\label{MainTheorem1} If $t$ is a fixed positive integer, then the following are true for the sequence $\{Y_t(n)\}.$

\noindent
(1)  The sequence is asymptotically normal with mean
$\mu_t(n)\sim \frac{\sqrt{6n}}{\pi}-\frac{t}{2}$ and variance  $\sigma_t^2(n)\sim \frac{(\pi^2-6)\sqrt{6n}}{2\pi^3}.$

\smallskip
\noindent
(2) If we  let $k_{t,n}(x):=\mu_t(n) + \sigma_t(n)x,$ then in terms of Gauss's error function $E(x)$ we have
 $$
 \lim_{n\rightarrow +\infty}D_t(k_{t,n}(x);n) =\frac{1}{\sqrt{2\pi}}\int_{-\infty}^{x}e^{-\frac{y^2}{2}}dy=:E(x).
 $$
\end{theorem}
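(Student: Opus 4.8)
The plan is to establish the asymptotic normality of $Y_t(n)$ via the method of moments, which in analytic-combinatorial settings is most naturally executed through a generating function encoding the joint distribution of the number of $t$-hooks across all partitions of $n$. First I would introduce the two-variable generating function
\begin{equation*}
F_t(x,q):=\sum_{n\geq 0}\sum_{\lambda\vdash n} x^{(\#\,t\text{-hooks of }\lambda)}\,q^n,
\end{equation*}
so that extracting the coefficient of $q^n$ and specializing recovers $Y_t(n)$ and its distribution. The essential structural input is a product formula for $F_t(x,q)$: the number of $t$-hooks in a partition is governed by its $t$-core and $t$-quotient decomposition, and classical hook-length combinatorics (the bijection between cells with hook length divisible by $t$ and the boxes of the $t$-quotient) gives a clean factorization. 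Concretely I expect something of the shape
\begin{equation*}
F_t(x,q)=\left(\prod_{n\geq 1}\frac{1}{1-q^{tn}}\right)^{\!?}\cdot\Big(\text{$t$-core factor}\Big),
\end{equation*}
where the variable $x$ enters through the pieces counting hooks of length exactly $t$; pinning down this formula precisely is the first real task.

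The heart of the argument is then a saddle-point or Meinardus-type asymptotic analysis of the coefficients. I would write $x=e^{u}$ and study the cumulant generating function $\log\big([q^n]F_t(e^{u},q)\big)/\,[q^n]F_t(1,q)$ as $n\to\infty$, with the goal of showing that, after centering by $\mu_t(n)$ and scaling by $\sigma_t(n)$, all cumulants of order $\geq 3$ vanish in the limit while the second cumulant converges. The standard route is to evaluate $[q^n]F_t(e^{u},q)$ by the circle method or by a Tauberian/saddle-point analysis near $q=1$, where the dominant singularity lives. Setting $q=e^{-s}$ and letting $s\to 0^{+}$, the logarithm of the generating function has a leading $\sim c/s$ pole-type singularity, and the $x$-dependence perturbs the location of the saddle; expanding to second order in $u$ near the saddle produces the Gaussian. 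The precise constants $\frac{\sqrt{6n}}{\pi}$ and $\frac{(\pi^2-6)\sqrt{6n}}{2\pi^3}$ should emerge from differentiating the singular expansion of $\log F_t(e^u,q)$ twice in $u$ and then evaluating at the saddle point $s_0\sim \pi/\sqrt{6n}$ dictated by $\log(1/(1-q^n))$, i.e. from the Hardy–Ramanujan asymptotic $p(n)\sim \frac{1}{4n\sqrt 3}e^{\pi\sqrt{2n/3}}$.

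With part (1) in hand, part (2) is almost immediate: the convergence of the accumulation function $D_t(k_{t,n}(x);n)$ to the standard normal CDF $E(x)$ is exactly the statement that asymptotic normality holds at the level of distribution functions, not merely moments. Here I would invoke the fact that convergence of all moments to those of a standard normal (which has a determinate moment problem) implies convergence in distribution, hence pointwise convergence of the CDFs $\frac{1}{p(n)}\#\{\lambda\vdash n: (Y_t\text{-count})\leq \mu_t(n)+\sigma_t(n)x\}$ to $E(x)$ at every continuity point $x$ of the limit—and the Gaussian CDF is continuous everywhere. A small amount of care is needed to translate between the integer-valued statistic and the continuous normalization $k_{t,n}(x)$, but this is routine once the moment convergence is established.

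The step I expect to be the main obstacle is the uniform control of the error terms in the saddle-point expansion—specifically, showing that the contributions away from the dominant saddle are negligible uniformly in the auxiliary variable $u$ in a neighborhood of $0$, so that the cumulant asymptotics are genuinely valid rather than merely formal. Establishing the requisite minor-arc estimates (or, in the Meinardus framework, verifying the growth hypotheses on the associated Dirichlet series uniformly in $x$) is where the technical weight of the proof lies; once those bounds are secured, extracting the mean, variance, and vanishing higher cumulants is a matter of careful but mechanical Laurent expansion of $\log F_t(e^u,e^{-s})$ about $s=0$.
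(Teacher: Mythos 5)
Your proposal follows essentially the same route as the paper: the two-variable generating function you posit is exactly Han's product formula $G_t(T;q)=\prod_{n\geq 1}(1+(T-1)q^{tn})^t/(1-q^n)$ (which the paper simply cites rather than re-derives from the $t$-core/$t$-quotient decomposition), your saddle-point analysis with $q=e^{-s}$, saddle $s_0\sim \pi/\sqrt{6n}$, and uniform minor-arc control is precisely the content of the paper's Proposition~\ref{Asymptotics1}, and your cumulant/moment-determinacy step is the paper's application of Curtiss's theorem on pointwise convergence of moment generating functions. The two technical obstacles you single out (pinning down the product formula and making the saddle-point error bounds uniform in the auxiliary variable) are exactly the points the paper's argument settles, so your outline is sound and matches the published proof.
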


\begin{remark}
The $t=1$ case of Theorem~\ref{MainTheorem1} recovers a result by
Brennan, Knopfmacher and Wagner \cite{BKW} on the distribution of ascents in partitions, as this number
equals the number of size 1-hooks.
\end{remark}

\begin{example}
Theorem~\ref{MainTheorem1} asserts that the limiting distribution of 2-hooks is a normal distribution with
mean
$\mu_2(n)\sim \frac{\sqrt{6n}}{\pi}-1$
and variance  $\sigma_2^2(n)\sim \frac{(\pi^2-6)\sqrt{6n}}{2\pi^3}.$
For $n=5000$, we find that
$$
\sum_{\lambda \vdash 5000} T^{\# \left\{ 2\in \mathcal{H}(\lambda)\right\}}=
704T+9211712T^2+\dots+1805943379138T^{98}+2T^{99}.
$$
Figure~\ref{figure1}  plots $Y_2(5000)$.

 \medskip
 \begin{center}
\includegraphics[height=58mm]{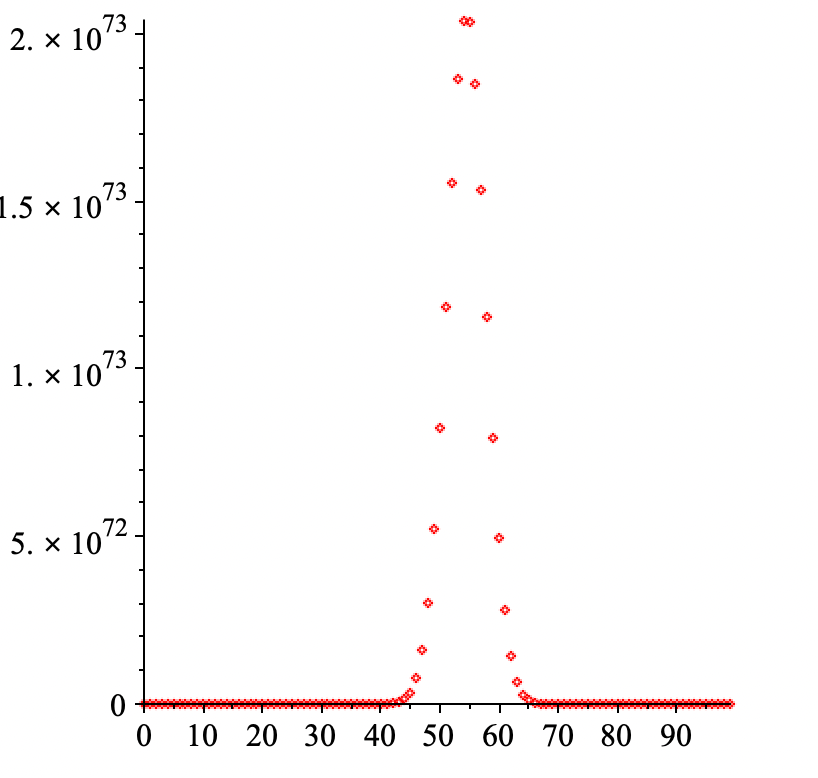}
\captionof{figure}{$Y_2(5000)$}\label{figure1}
\end{center}
\medskip

\noindent
Table~\ref{table1} illustrates the cumulative distribution approximation
$D_2(k_{2,5000}(x);5000) \approx E(x).$
\smallskip

\begin{center}
\begin{tabular}{|c|cc|cc|cc|}
\hline \rule[-3mm]{0mm}{8mm}
$x$       && $D_2(k_{2,5000}(x),5000)$           && $E(x)$  & $D_2(k_{2,5000}(x),5000)/E(x)$ & \\   \hline 
$-1.5$ && $0.0658\dots$  && $0.0668\dots$ & $0.9849\dots$&\\
$\vdots$ && $\vdots$  && $\vdots$ & $\vdots$&\\
$ 0.0$ &&  $0.5055\dots$ && $0.5000\dots$ & $1.0011\dots$ & \\
$ 1.0$ &&  $0.8246\dots$ && $0.8413\dots$ & $0.9802\dots$ & \\
$ 2.0$ &&  $0.9685\dots$ && $0.9772\dots$ & $0.9911\dots$ & \\
\hline
\end{tabular}
\captionof{table}{Asymptotics for the cumulative distribution for $n=5000$}\label{table1}
\end{center}
\end{example}

We next consider the sequence  $\{\widehat{Y}_t(n)\}$ of distributions of  the number of hook lengths in $t\N$  among the partitions of size $n.$
This question is motivated by work of Han that extends  (\ref{NO}) by giving infinite families of modular forms with level structure and cuspidal divisor.
If $\mathcal{H}_t(\lambda)$ is the multiset of hook lengths of $\lambda$ that are in $t\N$, then he proved (see Theorem 1.3 of \cite{Han}) that
$$
\sum_{\lambda} q^{|\lambda|} \prod_{h\in \mathcal{H}_t(\lambda)} \left(y-\frac{tyz}{h^2}\right)=
\prod_{n=1}^{\infty} \frac{(1-q^{tn})^t}{(1-(yq^t)^n)^{t-z}(1-q^n)}.
$$

For $t\geq 4,$ we prove that the limiting distribution is a  shifted Gamma distribution
with parameter $k=(t-1)/2$ and scale $\theta=\sqrt{2/(t-1)},$ and we determine asymptotics for the cumulative distribution
\begin{equation}\label{DistFcn2}
\widehat{D}_t(k;n):= \frac{\# \left \{ \lambda \vdash n \ {\text {\rm with $\leq k$ many hook lengths in  $t\N$}}\right\}}{p(n)}.
\end{equation}
 Recall (e.g. II.2 of \cite{Feller}) that a random variable $X_{k,\theta}$ satisfies the {\it Gamma distribution with parameter $k>0$ and scale $\theta>0$} if its probability distribution function is 
$F_{k,\theta}(x):=\frac{1}{\Gamma(k)\theta^k}\cdot x^{k-1}e^{-\frac{x}{\theta}}.$

\begin{theorem}\label{MainTheorem2}
If $t\geq 4,$ then the following are true for the sequence $\{\widehat{Y}_t(n)\}.$

\noindent
(1) The sequence satisfies

$$
 \widehat{Y}_t(n) \sim   \frac{n}{t}-\frac{\sqrt{3(t-1)n}}{\pi t} \cdot X_{\frac{t-1}{2},\sqrt{\frac{2}{t-1}}},
$$ 
and has mean
$\widehat{\mu}_t(n)\sim \frac{n}{t}-\frac{(t-1)\sqrt{6n}}{2\pi t},$ mode $\widehat{\mo}_t(n)\sim \frac{n}{t}-\frac{(t-3)\sqrt{6n}}{2\pi t},$ and variance  $\widehat{\sigma}_t^2(n)\sim \frac{3(t-1)n}{\pi^2t^2}.$

\smallskip
\noindent
(2) If we  let $\widehat{k}_{t,n}(x):=\widehat{\mu}_t(n) + \widehat{\sigma}_t(n)x,$ then in terms of the lower incomplete gamma function we have
 $$
 \lim_{n\rightarrow +\infty}\widehat{D}_t(\widehat{k}_{t,n}(x);n) =\frac{\gamma\left(\frac{t-1}{2};\sqrt{\frac{t-1}{2}}x+\frac{t-1}{2}\right)}{\Gamma\left(\frac{t-1}{2}\right)}.
 $$
\end{theorem}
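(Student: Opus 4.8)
The plan is to extract the complete two–variable generating function for $\widehat{Y}_t$ from Han's identity and then run a saddle–point analysis to identify the limiting moment generating function. Putting $z=0$ in Han's formula collapses each factor $y-\tfrac{tyz}{h^2}$ to $y$, giving
$$G(y,q):=\sum_{\lambda}q^{|\lambda|}y^{\#\mathcal{H}_t(\lambda)}=\prod_{n=1}^{\infty}\frac{(1-q^{tn})^t}{(1-(yq^t)^n)^t(1-q^n)}.$$
Since $\#\mathcal{H}_t(\lambda)=\widehat{Y}_t(\lambda)$, the ratio $\mathbb{E}[y^{\widehat{Y}_t(n)}]=[q^n]G(y,q)/p(n)$ is the probability generating function of $\widehat{Y}_t(n)$ for a uniformly random $\lambda\vdash n$. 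As the mean lives at the macroscopic scale $n/t$ while the fluctuations are of size $\sqrt{n}$, I would pass to the centered variable $W_n:=n/t-\widehat{Y}_t(n)$ and prove $W_n/A_n\Rightarrow X_{(t-1)/2,\sqrt{2/(t-1)}}$, where $A_n:=\widehat{\sigma}_t(n)=\frac{\sqrt{3(t-1)n}}{\pi t}$. Writing $y=e^{-\alpha}$ gives $\mathbb{E}[e^{\alpha W_n}]=e^{\alpha n/t}[q^n]G(e^{-\alpha},q)/p(n)$, so it suffices to show that, with $\alpha=s/A_n$, this tends to $(1-\theta s)^{-k}$, the moment generating function of the Gamma law with $k=(t-1)/2$ and $\theta=\sqrt{2/(t-1)}$.

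The crux is the behavior of $G(e^{-\alpha},e^{-\epsilon})$ as $\epsilon\to0^+$ in the coupled regime $\alpha\asymp\epsilon\asymp n^{-1/2}$. Treating each of the three Euler products with the classical eta asymptotic $\sum_{n\ge1}\log(1-e^{-an})=-\frac{\pi^2}{6a}+\tfrac12\log\frac{2\pi}{a}+O(a)$ as $a\to0^+$—applied with $a=\epsilon$, with $a=t\epsilon$, and with $a=\alpha+t\epsilon$, the last because $(yq^t)^n=e^{-(\alpha+t\epsilon)n}$—the $1/\epsilon$ singularities cancel in pairs and leave
$$\log G(e^{-\alpha},e^{-\epsilon})=\frac{t\pi^2}{6(\alpha+t\epsilon)}-\frac{t-1}{2}\log\epsilon+\frac{t}{2}\log(\alpha+t\epsilon)+C+o(1),$$
with $C$ constant. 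The decisive point is that the second fugacity enters as the \emph{growing} weight $y^n$ and therefore couples to $q^t$, producing the logarithmic term $-\tfrac{t-1}{2}\log\epsilon$; this is precisely the mechanism that yields a power law (Gamma) rather than the Gaussian of Theorem~\ref{MainTheorem1}.

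I would then extract $[q^n]G$ by the circle method, writing $[q^n]G=\frac{1}{2\pi i}\oint G(e^{-\alpha},q)q^{-n-1}\,dq$ and evaluating by the saddle point of $\Phi(\epsilon):=\log G(e^{-\alpha},e^{-\epsilon})+n\epsilon$. The dominant terms give $\alpha+t\epsilon^{\ast}\sim t\pi/\sqrt{6n}$, so with $\alpha=s/A_n$ one has $\alpha/(\alpha+t\epsilon^{\ast})\to s\theta$ and hence $\epsilon^{\ast}\sim\frac{\pi}{\sqrt{6n}}(1-s\theta)$. Evaluating $\Phi(\epsilon^{\ast})$, the leading part equals $\frac{\pi\sqrt{6n}}{3}-\frac{n\alpha}{t}$; the factor $e^{-n\alpha/t}$ is cancelled exactly by the prefactor $e^{\alpha n/t}$, while $\frac{\pi\sqrt{6n}}{3}$ together with every $s$–independent correction (including the second–order steepest–descent factor, which is $s$–independent since $\alpha+t\epsilon^{\ast}$ is) cancels against $\log p(n)$, as it must since $\mathbb{E}[e^{0}]=1$. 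The only surviving $s$–dependence is $-\tfrac{t-1}{2}\log\epsilon^{\ast}\to-\tfrac{t-1}{2}\log(1-s\theta)$, giving $\mathbb{E}[e^{sW_n/A_n}]\to(1-\theta s)^{-(t-1)/2}$. By the continuity theorem this establishes $W_n/A_n\Rightarrow X_{(t-1)/2,\sqrt{2/(t-1)}}$, which is part (1); the asserted $\widehat{\mu}_t(n)$ and $\widehat{\mo}_t(n)$ are the mean and mode of $n/t-A_nX$ computed from those of the Gamma law. Part (2) then follows by unwinding the definition $\widehat{D}_t(\widehat{k}_{t,n}(x);n)=\mathbb{P}(\widehat{Y}_t(n)\le\widehat{\mu}_t(n)+\widehat{\sigma}_t(n)x)$: this equals a probability for the limiting Gamma variable, which is given by the lower incomplete gamma function, yielding the stated formula.

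The main obstacle is rigor and uniformity in the asymptotics, not the formal saddle computation. One must control the $O(a)$ error of the eta asymptotic uniformly as $\alpha,\epsilon$ range over the saddle window $\asymp n^{-1/2}$; check that these and the higher corrections to $\log G$ do not perturb the surviving $\log(1-s\theta)$; and—most delicately—bound the contribution of the contour away from the real saddle (the minor arcs), where the coupling $(yq^t)^n=e^{-(\alpha+t\epsilon)n}$ requires care precisely because the second fugacity is not a fixed constant. Establishing these uniform estimates is where the genuine work lies.
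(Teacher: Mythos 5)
Your strategy for part (1) is, in substance, the paper's own proof. The generating function you extract from Han's identity by setting $z=0$ is exactly \eqref{tmulthookGenFcn}; your expansion of $\log G(e^{-\alpha},e^{-\epsilon})$, the saddle location $\alpha+t\epsilon^{\ast}\sim t\pi/\sqrt{6n}$, and the surviving factor $(1-\theta s)^{-(t-1)/2}$ reproduce Proposition~\ref{Asymptotics2} and its proof (compare \eqref{eval2b}--\eqref{3derivative2b} and the computation following them); and the continuity theorem you invoke is Curtiss's theorem, which is how Section~\ref{Proofs} concludes. The differences are cosmetic: you center at $n/t$ and let the $s$-independent factors cancel by consistency at $s=0$, where the paper evaluates the constants explicitly and divides by $p(n)\sim(4\sqrt{3}n)^{-1}e^{\pi\sqrt{2n/3}}$; the uniform-error and minor-arc work you correctly identify as the real substance is carried out in Lemma~\ref{DiLogEstimates} and in the tail estimate imported from the proof of Proposition~\ref{Asymptotics1}. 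Up to this point the proposal is sound.

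The genuine gap is your final step, part (2): unwinding the definition does \emph{not} yield the stated formula, and asserting that it does is an error your own part (1) exposes. With $k=\frac{t-1}{2}$, $\theta=\sqrt{2/(t-1)}$, $A_n=\widehat{\sigma}_t(n)$, part (1) gives $\widehat{Y}_t(n)\approx n/t-A_nX_{k,\theta}$ and $\widehat{\mu}_t(n)= n/t-k\theta A_n$, so
$$
\mathbb{P}\left(\widehat{Y}_t(n)\le \widehat{\mu}_t(n)+A_nx\right)=\mathbb{P}\left(X_{k,\theta}\ge k\theta-x\right)\longrightarrow 1-\frac{\gamma\left(\frac{t-1}{2};\,\frac{t-1}{2}-\sqrt{\frac{t-1}{2}}\,x\right)}{\Gamma\left(\frac{t-1}{2}\right)},
$$
an upper-tail probability of the Gamma law, whereas the stated limit $\gamma\left(\frac{t-1}{2};\frac{t-1}{2}+\sqrt{\frac{t-1}{2}}x\right)\big/\Gamma\left(\frac{t-1}{2}\right)$ is the distribution function of $X_{k,\theta}-k\theta$, the \emph{reflection} of the true limit law $k\theta-X_{k,\theta}$. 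Since a Gamma distribution is not symmetric about its mean, these are different functions: at $x=0$ and $t=11$ they equal $0.44\dots$ and $0.55\dots$ respectively. Every other piece of evidence confirms that the limit is $k\theta-X_{k,\theta}$: the limiting moment generating function $e^{k\theta r}(1+\theta r)^{-k}$, the claimed mean and mode (mode above mean, i.e.\ left skew), and the combinatorial fact that $\widehat{Y}_t(n)=(n-|\text{$t$-core}|)/t\le n/t$. Note that the paper's own proof makes the same silent leap (it recalls the Gamma distribution function but never performs the reflection coming from $a=-1$), so the displayed part (2) is in fact inconsistent with part (1); your blind write-up inherits this problem rather than exposing it. To finish correctly you must prove the reflected formula above, not the one stated.
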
 

\begin{remark} The proof of Theorem~\ref{MainTheorem2}, which uses  properties of Gamma distributions with
 $k>1,$
does not apply for $t\in \{2,3\}$ as $(t-1)/2 \leq 1.$
Indeed, the  $\{\widehat{Y}_2(n)\}$ and $\{\widehat{Y}_3(n)\}$ do not even have  continuous limiting distributions.
 The fact that 100\% of $n$ do not have a 2-core or 3-core partition \cite{GranvilleOno} implies that these distributions
 are populated with many vanishing terms as illustrated by
$$
\sum_{\lambda \vdash 19} T^{\# \mathcal{H}_2(\lambda)}=300T^9 + 185T^8 + 5T^2.
$$
\end{remark}

\begin{example}
Theorem~\ref{MainTheorem2} gives
 $\widehat{Y}_{11}(n) \sim \frac{n}{11}-\frac{\sqrt{30n}}{11\pi}\cdot X_{5,\frac{\sqrt{5}}{5}},$
with mean $\widehat{\mu}_{11}(n)\sim \frac{n}{11}-\frac{5\sqrt{6n}}{11\pi}$ and variance
 $\widehat{\sigma}_{11}^2(n)\sim \frac{30n}{121\pi^2}.$
 Figure~\ref{figure2}  gives $\widehat{Y}_{11}(1000).$

 \smallskip
 \begin{center}
\includegraphics[height=67mm]{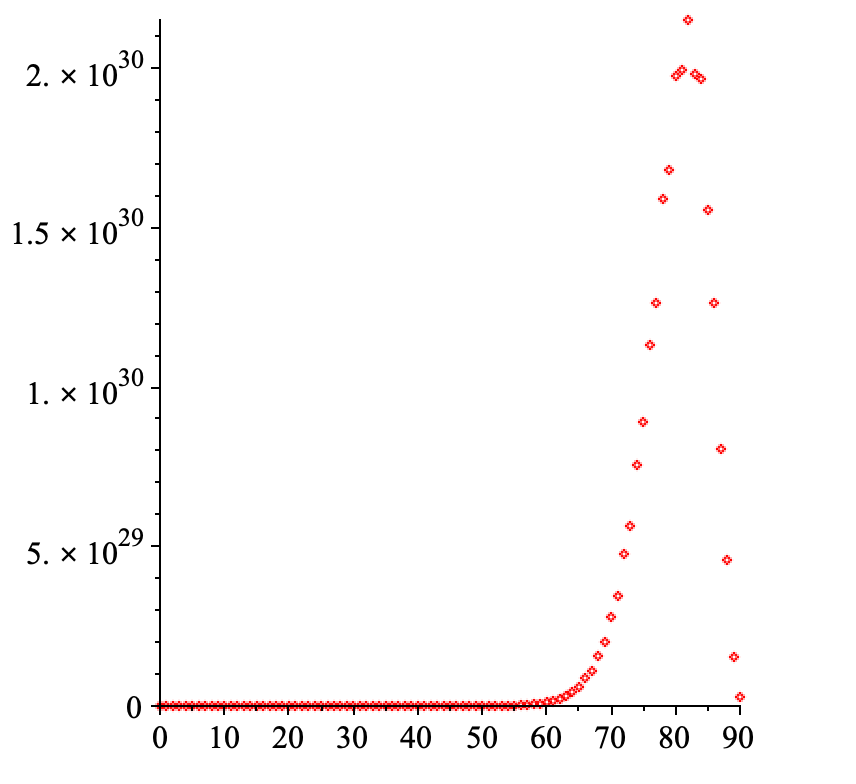}
\captionof{figure}{$\ \ \ \widehat{Y}_{11}(1000)$}\label{figure2}
\end{center}
\smallskip
 
\noindent
Table~\ref{table2} illustrates the approximation  $\widehat{D}_{11}({k}_{11,1000}(x);1000) \approx \frac{\gamma\left(5;\sqrt{5}x+5\right)}{24}=:\widehat{E}_{11}(x).$
\smallskip

\begin{center}
\begin{tabular}{|c|cc|cc|cc|}
\hline \rule[-3mm]{0mm}{8mm}
$x$       && $\widehat{D}_{11}({k}_{11,1000}(x);1000)$           && $\widehat{E}_{11}(x)$  & $\widehat{D}_{11}({k}_{11,1000}(x);1000)/\widehat{E}_{11}(x)$ & \\   \hline 
$-1.00$ && $0.1319\dots$  && $0.1467\dots$ & $0.8993\dots$&\\
$\vdots$ && $\vdots$  && $\vdots$ & $\vdots$&\\
$ 0.75$ &&  $0.7410\dots$ && $0.7954\dots$ & $0.9315\dots$ & \\
$ 1.00$ && $0.8226\dots$ && $0.8474\dots$ &$ 0.9707\dots$ &\\
$ 1.25$ &&  $0.8872\dots$ && $0.8880\dots$ & $0.9991\dots$ & \\
\hline
\end{tabular}
\captionof{table}{Asymptotics for the cumulative distribution for $n=1000$}\label{table2}
\end{center} 
\end{example}
\smallskip

This paper is organized as follows.
 In Section~\ref{NutsAndBolts} we recall work of Han that offers the relevant enumerative generating functions, and we then determine
their asymptotics  via the saddle point method, with assistance from the  Euler-Maclaurin summation formula. In Section~\ref{Proofs} we use these asymptotics to compute the moments of these statistics, which in turn imply Theorems~\ref{MainTheorem1} and \ref{MainTheorem2} thanks to a classical theorem of Curtiss.

\section*{Acknowledgements} \noindent
The authors thank George Andrews, Kathrin Bringmann, Richard Stanley and Ole Warnaar for their valuable correspondence on this project. Finally, they thank the referees for their careful reading of the original submission and for their helpful suggestions.

\section{Nuts and Bolts}\label{NutsAndBolts}

We recall work of Han on the enumeration of hook lengths, and we derive important propositions
(see Proposition~\ref{Asymptotics1} and \ref{Asymptotics2})  that are central to the proof of Theorems~\ref{MainTheorem1} and \ref{MainTheorem2}.
Han obtained (see Thm. 1.4 and Cor. 5.1 of \cite{Han}) the following important generating functions for each fixed positive integer $t:$
\begin{equation}\label{thookGenFcn}
G_t(T;q)=
\sum_{n=0}^{\infty} P_t(n;T)q^n=\sum_{m,n} p_t(m;n)T^mq^n
:=\sum_{\lambda}q^{|\lambda|} T^{\# \left\{ t\in \mathcal{H}(\lambda)\right\}}
=\prod_{n=1}^{\infty}\frac{(1+(T-1)q^{tn})^t}{1-q^n},
\end{equation}
\begin{equation}\label{tmulthookGenFcn}
\widehat{G}_t(T;q)=\sum_{n=0}^{\infty} \widehat{P}_t(n;T)q^n=\sum_{m,n} \widehat{p}_t(m;n)T^mq^n:=\sum_{\lambda} q^{|\lambda|}T^{\# \mathcal{H}_t(\lambda)}=
\prod_{n=1}^{\infty} \frac{(1-q^{tn})^t}{(1-(Tq^t)^n)^t(1-q^n)}.
\end{equation}

\noindent
The next  two propositions
on $P_t(n;T)$ and $\widehat{P}_t(n;T)$ are the main results of this section.

\begin{prop}\label{Asymptotics1}
Suppose that $\eta \in (0,1]$ and
  $\eta\leq T\leq\eta^{-1}.$   If $c(T):=\sqrt{\pi^2/6-\Li(1-T)},$ then 
$$
{P}_t(n;T)=\frac{c(T)}{2\sqrt{2} \pi n T^{\frac{t}{2}}}\cdot e^{c(T)\left( 2\sqrt{n}-\frac{1}{\sqrt{n}}\right)}\cdot\left(1+O_{\eta}(n^{-\frac{1}{7}})\right),
$$
where $\Li(z):=-\int_{0}^{z}\frac{\log(1-u)}{u} du$ is the dilogarithm function.
\end{prop}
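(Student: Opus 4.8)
The plan is to extract the asymptotics of $P_t(n;T)$ from the generating function $G_t(T;q)=\prod_{n\ge 1}(1+(T-1)q^{tn})^t/(1-q^n)$ via the saddle point method, treating $T$ as a fixed parameter in the compact range $[\eta,\eta^{-1}]$. First I would write $P_t(n;T)$ as a Cauchy integral $\frac{1}{2\pi i}\oint G_t(T;q)q^{-n-1}\,dq$, and then substitute $q=e^{-\tau}$ so that the problem becomes controlling the behavior of $\log G_t(T;e^{-\tau})$ as $\tau\to 0^+$ along the positive real axis. The dominant contribution comes from $\sum_{n\ge 1}-\log(1-e^{-n\tau})$, the logarithm of the partition generating function, whose small-$\tau$ expansion is the classical $\frac{\pi^2}{6\tau}+\frac{1}{2}\log\frac{\tau}{2\pi}+O(\tau)$. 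The new ingredient is the correction factor $t\sum_{n\ge 1}\log(1+(T-1)e^{-tn\tau})$; here I would apply Euler--Maclaurin summation (as the paper advertises) to show its leading asymptotic is $-\frac{t}{t\tau}\,\Li(1-T)\cdot\frac{1}{\tau}$ type behavior—more precisely, that $\sum_{n\ge1}\log(1+(T-1)e^{-tn\tau})\sim \frac{1}{t\tau}\int_0^\infty \log(1+(T-1)e^{-u})\,du$, and the integral evaluates in terms of the dilogarithm to give $-\Li(1-T)/t$ after the substitution and an integration by parts. Combining, $\log G_t(T;e^{-\tau})\sim \frac{c(T)^2}{\tau}+\cdots$, which explains the appearance of $c(T)=\sqrt{\pi^2/6-\Li(1-T)}$.

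With the small-$\tau$ expansion of $\Phi(\tau):=\log G_t(T;e^{-\tau})$ in hand, I would locate the saddle point of $\Phi(\tau)+n\tau$ by solving $\Phi'(\tau)=-n$, yielding $\tau_0\sim c(T)/\sqrt{n}$, and then expand to enough orders to capture both the main term $e^{2c(T)\sqrt{n}}$ and the secondary corrections. The stated refinement $e^{c(T)(2\sqrt n-1/\sqrt n)}$ and the prefactor $\frac{c(T)}{2\sqrt2\,\pi n\,T^{t/2}}$ indicate that one must keep the next-order term in the expansion of $\Phi$ (the term producing the $-c(T)/\sqrt n$) as well as the constant term $\frac{1}{2}\log\frac{\tau}{2\pi}$ together with the value of the correction factor at $\tau=0$; the latter contributes the $T^{-t/2}$, since $t\sum\log(1+(T-1)e^{-tn\tau})$ has a finite part that, exponentiated with the appropriate sign, produces a power of $T$. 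The Gaussian integration over the transverse direction at the saddle contributes the standard factor $1/\sqrt{2\pi\,\Phi''(\tau_0)}$, and with $\Phi''(\tau_0)\sim 2c(T)^2/\tau_0^3\sim 2n^{3/2}/c(T)$ this reproduces the power $n^{-1}$ and the remaining numerical constants after simplification.

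The main obstacle is making the saddle point method rigorous and uniform in $T$ over $[\eta,\eta^{-1}]$ while simultaneously tracking the error term to the claimed order $O_\eta(n^{-1/7})$. Two technical points demand care. First, I must bound the integrand away from the major arc: on the circle $|q|=e^{-\tau_0}$ one shows that $|G_t(T;e^{-\tau_0+i\theta})|$ is exponentially smaller than its value at $\theta=0$ once $|\theta|$ exceeds a suitable width (roughly $n^{-\alpha}$ for some $\alpha\in(1/2,3/4)$), which requires a minor-arc estimate controlling both the $1/(1-q^n)$ factors and the $(1+(T-1)q^{tn})^t$ factors; the condition $T\in[\eta,\eta^{-1}]$ is what keeps the latter bounded and bounded away from the zero locus $1+(T-1)q^{tn}=0$. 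Second, the Euler--Maclaurin application must supply explicit error bounds uniform in $T$, and the somewhat unusual exponent $1/7$ in the error almost certainly arises from balancing the width of the major arc against the remainder terms in both the saddle expansion and the minor-arc bound; optimizing this trade-off—and verifying that the $T$-dependence of all implied constants is controlled by $\eta$—is the delicate part of the argument. Once these uniform estimates are established, assembling the pieces is a routine, if lengthy, computation.
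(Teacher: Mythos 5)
Your proposal follows essentially the same route as the paper's own proof: the Cauchy-integral/saddle-point setup with saddle $\tau_0\sim c(T)/\sqrt{n}$, Euler--Maclaurin summation producing the dilogarithm terms (including the finite part that yields $T^{-t/2}$), a Gaussian major arc with $\Phi''(\tau_0)\sim 2n^{3/2}/c(T)$, and an exponentially small minor arc balanced against the major-arc width $n^{-5/7}$ to give the $O_{\eta}(n^{-1/7})$ error. Every quantitative target you state matches the paper's computation; the only piece you leave schematic --- the minor-arc bound uniform in $T\in[\eta,\eta^{-1}]$, which the paper handles by comparing $|G_t(T;ze^{ix})/G_t(T;z)|$ over the finite product $j\in[\sqrt{n},2\sqrt{n}]$ with separate cases $T>1$ and $T\le 1$ --- is exactly what the published argument supplies.
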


\noindent
The next proposition is more subtle, and pertains  to suitable real sequences.

\begin{prop}\label{Asymptotics2}
If $t$ is a positive integer and $T:=\{T_n\}$ is a positive real sequence  for which
$T_n = e^{\frac{\alpha(T)+\varepsilon_T(n)}{\sqrt{n}}},$
where $\alpha(T)$ is real and $\varepsilon_T(n)=o_T(1),$
then
\begin{displaymath}
\begin{split}
&\widehat{P}_t(n;T_n)=\\
&\ \frac{1}{2^{\frac{7}{4}}3^{\frac{1}{4}}n}\cdot\sqrt{\frac{1}{\sqrt{6}}+\frac{\alpha(T)+\varepsilon_T(n)}{\pi t}}\left(\frac{\pi t}{\pi t+\sqrt{6}\left(\alpha(T)+\varepsilon_T(n)\right)}\right)^{\frac{t}{2}}\cdot e^{\pi\sqrt{n}\left(\sqrt{\frac{2}{3}}+\frac{\alpha(T)+\varepsilon_T(n)}{\pi t}\right)}\cdot(1+O_{T}(n^{-\frac{1}{7}})).
\end{split}
\end{displaymath} 
\end{prop}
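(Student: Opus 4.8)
The plan is to extract $\widehat{P}_t(n;T_n)$ by the saddle point method applied to Cauchy's formula
\[
\widehat{P}_t(n;T_n)=\frac{1}{2\pi i}\oint \frac{\widehat{G}_t(T_n;q)}{q^{n+1}}\,dq,
\]
taken over the circle $|q|=e^{-\tau_0}$ for a radius $\tau_0=\tau_0(n)$ to be chosen as the saddle. Writing $q=e^{-\tau}$ and using the product formula \eqref{tmulthookGenFcn}, the logarithm of the integrand splits into three sums of the shape $\sum_{j\geq 1}\log(1-\,\cdot\,)$. Setting $\beta:=\alpha(T)+\varepsilon_T(n)$ and observing the key combination $T_n^{\,j}q^{tj}=e^{-ju}$ with $u:=t\tau-\beta/\sqrt{n}$, these three pieces become $tL(t\tau)$, $-tL(u)$, and $-L(\tau)$, where $L(s):=\sum_{j\geq 1}\log(1-e^{-js})$. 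Thus $\log\widehat{G}_t(T_n;e^{-\tau})=tL(t\tau)-tL(u)-L(\tau)$.

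First I would record the Euler--Maclaurin (equivalently, modular/eta-transformation) expansion $L(s)=-\frac{\pi^2}{6s}-\frac12\log\frac{s}{2\pi}+O(s)$, valid as $\mathrm{Re}(s)\to 0^+$; this is where the Euler--Maclaurin summation formula does its work, and it must be made uniform for the complex values of $s$ arising on the contour. Substituting into the three pieces, the crucial cancellation is that the polar terms of $tL(t\tau)$ and $-L(\tau)$ kill each other, $-\frac{\pi^2}{6\tau}+\frac{\pi^2}{6\tau}=0$, leaving
\[
\log\widehat{G}_t(T_n;e^{-\tau})=\frac{t\pi^2}{6u}+\frac{t}{2}\log\frac{u}{t\tau}+\frac12\log\frac{\tau}{2\pi}+O(\tau).
\]
Hence the full exponent $\Phi(\tau):=n\tau+\log\widehat{G}_t(T_n;e^{-\tau})$ is dominated by $n\tau+\frac{t\pi^2}{6u}$.

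Next I would locate the saddle by solving $\Phi'(\tau)=0$. To leading order this reads $n=\frac{t^2\pi^2}{6u^2}$, giving $u_0=\frac{t\pi}{\sqrt{6n}}$ and therefore $\tau_0=\frac{1}{\sqrt{n}}\bigl(\frac{\pi}{\sqrt6}+\frac{\beta}{t}\bigr)$. Evaluating the exponent there yields $n\tau_0+\frac{t\pi^2}{6u_0}=\pi\sqrt{n}\bigl(\sqrt{\tfrac23}+\frac{\beta}{\pi t}\bigr)$, which is exactly the claimed exponential; the two algebraic factors in the statement come from the surviving logarithmic terms, since $\frac{t}{2}\log\frac{u_0}{t\tau_0}=\log\left(\frac{\pi t}{\pi t+\sqrt6\,\beta}\right)^{t/2}$ and $\frac12\log\frac{\tau_0}{2\pi}$ contributes the factor $\sqrt{\tau_0/2\pi}$. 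Finally, $\Phi''(\tau_0)=\frac{t^3\pi^2}{3u_0^3}\sim\frac{2\sqrt6\,n^{3/2}}{\pi}$, so the Laplace/Gaussian factor $\bigl(2\pi\Phi''(\tau_0)\bigr)^{-1/2}=\bigl(2\cdot6^{1/4}n^{3/4}\bigr)^{-1}$ combines with $\sqrt{\tau_0/2\pi}=\frac{1}{\sqrt2\,n^{1/4}}\sqrt{\frac{1}{\sqrt6}+\frac{\beta}{\pi t}}$ to produce the constant $2^{-7/4}3^{-1/4}$, the factor $\sqrt{\frac{1}{\sqrt6}+\frac{\beta}{\pi t}}$, and the power $n^{-1}$, precisely as asserted.

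The main obstacle is the rigorous error analysis, which must be uniform in the sequence $T_n$ because $\beta$ carries the $n$-dependent quantity $\varepsilon_T(n)$. I would cut the circle into a major arc $|\arg q|\le\delta$ about the saddle and its complementary minor arc. On the major arc one substitutes $\tau=\tau_0-i\phi$, Taylor expands $\Phi$ to second order (the linear term vanishes and $\Phi''(\tau_0)>0$ gives a genuine Gaussian peak), extends the Gaussian integral to $\R$, and bounds the cubic remainder; the $O(s)$ tails of the three copies of $L$ contribute only a harmless $1+O(n^{-1/2})$. On the minor arc one must show that $\mathrm{Re}\,\Phi$ drops enough that the contribution is exponentially negligible, which requires lower bounds on $\mathrm{Re}\bigl(L(\tau)-tL(t\tau)+tL(u)\bigr)$ away from $\phi=0$. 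Balancing the width $\delta$ against these two competing estimates, and tracking the dependence of every implied constant on $\alpha(T)$ and on the rate of decay of $\varepsilon_T(n)$, is what produces a uniform error of the stated shape $1+O_T(n^{-1/7})$, matching the exponent in Proposition~\ref{Asymptotics1}.
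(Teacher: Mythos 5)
Your proposal is correct and takes essentially the same route as the paper: both apply the saddle-point method to Cauchy's integral for $\widehat{P}_t(n;T_n)$, expand the three log-sums via Euler--Maclaurin (the paper's Lemma~\ref{DiLogEstimates}, your $L(s)$ expansion), exploit the same cancellation of the $\pi^2/(6\tau)$ polar terms so that only the $T_n$-twisted piece survives, and finish with the same Gaussian evaluation at the saddle together with a major/minor-arc error analysis giving the factor $1+O_T(n^{-\frac17})$. Your saddle location, exponent, second derivative, and assembled algebraic prefactor all agree with the paper's \eqref{beta}, \eqref{eval2b}, and \eqref{2derivative2b}.
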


\subsection{Proof of Proposition~\ref{Asymptotics1}}

The proof of Proposition~\ref{Asymptotics1} requires the next lemma.

\begin{lemma}\label{DiLogEstimates}
If $\eta\in(0,1],$ then for  $0<\alpha<1$ and $\eta\leq T\leq\eta^{-1}$ we have
\begin{align}
&\sum_{j=1}^{\infty}\log(1-e^{-j\alpha})=-\frac{\pi^2}{6\alpha}-\frac{1}{2}\log\left(\frac{\alpha}{2\pi}\right)+O(\alpha)\label{eq:a0},\\
    &\sum_{n=1}^{\infty}\frac{t^2n(T-1)}{T-1+e^{tn\alpha}}=-\frac{\Li(1-T)}{\alpha^2}+O_{\eta}(1)\label{eq:a},\\
     &\sum_{n=1}^{\infty}\log\left(1+(T-1)e^{-tn\alpha}\right)=-\frac{\Li(1-T)}{t\alpha}-\frac{1}{2}\log T+O_{\eta}(\alpha),\label{eq:b}\\
      &\sum_{n=1}^{\infty}\frac{t^3n^2e^{-tn\alpha}}{(1+(T-1)e^{-tn\alpha})^2}=-\frac{2}{\alpha^3}\frac{\Li(1-T)}{T-1}+O_{\eta}(\alpha).\label{eq:c}
    \end{align}
\end{lemma}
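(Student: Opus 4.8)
The plan is to prove the four asymptotic estimates in Lemma~\ref{DiLogEstimates} by recognizing each sum as a Riemann-sum-type approximation to an integral, controlled rigorously via the Euler--Maclaurin summation formula as $\alpha \to 0^+$. The underlying principle is that a sum $\sum_{n\geq 1} f(n\alpha)$ behaves like $\frac{1}{\alpha}\int_0^\infty f(u)\,du$ plus boundary and derivative corrections; the entire content of the lemma is to compute the leading term together with enough of the correction to pin down the stated error order. I would treat \eqref{eq:a0} first, since it is the classical estimate for $\log\prod(1-q^n)$ (the Dedekind eta asymptotic). The cleanest route is to write $\log(1-e^{-j\alpha}) = -\sum_{k\geq 1}\frac{1}{k}e^{-jk\alpha}$, sum the geometric series in $j$ to get $-\sum_{k\geq 1}\frac{1}{k}\cdot\frac{1}{e^{k\alpha}-1}$, and then apply Euler--Maclaurin (or a Mellin transform) to extract the $-\frac{\pi^2}{6\alpha}$ pole term, the $-\frac12\log(\alpha/2\pi)$ term coming from the $\zeta$-function and Stirling-type contributions, and the $O(\alpha)$ remainder.

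For \eqref{eq:b} I would set $f(u) = \log(1 + (T-1)e^{-tu})$ and apply Euler--Maclaurin to $\sum_{n\geq 1} f(n\alpha)$. The leading term is $\frac{1}{\alpha}\int_0^\infty \log(1+(T-1)e^{-tu})\,du$; substituting $v = e^{-tu}$ converts this into $-\frac{1}{t\alpha}\int_0^{1}\frac{\log(1+(T-1)v)}{v}\,dv$, and after the shift $w = 1-(T-1)v/(\dots)$ one identifies it with the dilogarithm, yielding $-\frac{\Li(1-T)}{t\alpha}$. The constant term $-\frac12\log T$ comes from the $-\frac12 f(0^+)$ correction in Euler--Maclaurin, since $f(0) = \log T$; the $O_\eta(\alpha)$ error comes from the first derivative correction term. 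Equations \eqref{eq:a} and \eqref{eq:c} I would handle in the same spirit, noting that up to constants they are essentially $\alpha$-derivatives of the sum in \eqref{eq:b}. Concretely, differentiating the summand of \eqref{eq:b} with respect to $\alpha$ produces, up to factors of $t$ and $n$, exactly the summands appearing in \eqref{eq:a} and \eqref{eq:c}; so the leading behaviors $-\Li(1-T)/\alpha^2$ and $-\frac{2}{\alpha^3}\frac{\Li(1-T)}{T-1}$ should follow by differentiating the right-hand side of \eqref{eq:b} in $\alpha$, once I justify that differentiating the asymptotic expansion is legitimate (which Euler--Maclaurin applied directly to each sum guarantees).

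The uniformity in $T$ over the compact range $\eta \leq T \leq \eta^{-1}$ is what forces the explicit $\eta$-dependence in the error terms, so throughout I would keep track of how constants in the Euler--Maclaurin remainder depend on $T$ through bounds on $f$ and its derivatives. The main obstacle I anticipate is \eqref{eq:c}: the summand decays like $t^3n^2 e^{-tn\alpha}$, so the sum is of order $\alpha^{-3}$, meaning I must extract a term two orders beyond the naive leading integral while still controlling the remainder at $O_\eta(\alpha)$. This requires carrying the Euler--Maclaurin expansion to higher order and verifying that the integral $\int_0^\infty \frac{t^3 u^2 e^{-tu}}{(1+(T-1)e^{-tu})^2}\,du$ (after the $\frac{1}{\alpha^3}$ scaling from $n^2\,du \sim u^2\,du/\alpha^3$) evaluates to $-2\,\Li(1-T)/(T-1)$; I would verify this by substituting $v = e^{-tu}$ and recognizing the resulting rational integrand as $-2v^{-1}\log(1+(T-1)v)$-type, tying it back to the dilogarithm via integration by parts. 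Once these four estimates are established, they feed directly into the saddle-point analysis of $G_t(T;q)$ from \eqref{thookGenFcn} to prove Proposition~\ref{Asymptotics1}.
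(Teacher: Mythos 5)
Your proposal is correct and follows essentially the same route as the paper: the paper's proof is exactly an application of Euler--Maclaurin summation to each sum (with $a=0$, low order $j$), evaluating the resulting leading integrals in closed form via the dilogarithm substitution $v=e^{-t\alpha x}$, and handling the remaining cases \emph{mutatis mutandis}. Your auxiliary devices --- the double-sum expansion for \eqref{eq:a0}, and viewing \eqref{eq:a}, \eqref{eq:c} as $\alpha$-derivatives of \eqref{eq:b} as a consistency check before justifying everything by direct Euler--Maclaurin --- are harmless refinements of, not departures from, that argument.
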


\begin{proof} 
For $f\in C^{j+1}([a,b])$ and $a,b\in\Z$, Euler-Maclaurin summation 
(e.g.  Thm. 2.1.9 of \cite{Murty}) gives
$$
    \sum_{a<n\leq b}f(n)=\int_{a}^{b}f(x)dx+\sum_{r=0}^{j}\frac{(-1)^{r+1}}{(r+1)!}\left(f^{(r)}(b)-f^{(r)}(a)\right)B_{r+1}+\frac{(-1)^j}{(j+1)!}\int_{a}^bB_{j+1}(x-\lfloor x\rfloor)f^{(j+1)}(x)dx,
$$
where $B_r(x)$ is the $r$th Bernoulli polynomial and $B_r:=B_r(0).$  Letting $a=0$ and $j=0$  gives
\begin{align*}
    \sum_{n=1}^{b}\frac{t^2n(T-1)}{T-1+e^{tn\alpha}}&=\int_{0}^{b}\frac{t^2(T-1)x}{T-1+e^{t\alpha x}}dx+\frac{t^2(T-1)b}{2(T-1+e^{t\alpha b})}\\
    &\hspace{1.in}+\int_{0}^bB_1(x-\lfloor x\rfloor)\frac{t^2(T-1)(T-1+e^{t\alpha x})-t^3\alpha(T-1)xe^{t\alpha x}}{(T-1+e^{t\alpha x})^2}dx\\
    &=\frac{1}{\alpha^2}\left[\Li((1-T)e^{-t\alpha b})-\Li(1-T)-t\alpha\log\left((T-1)e^{-t\alpha b}+1\right)\right]+O_{\eta}\left(\frac{b^3}{e^{t\alpha b}}\right).
\end{align*}
To obtain \eqref{eq:a}, we let $b\rightarrow\infty$ and find that
$$
\sum_{n=1}^{\infty}\frac{t^2n(T-1)}{T-1+e^{tn\alpha}}=-\frac{\Li(1-T)}{\alpha^2}+O_{\eta}(1).
$$
Applying Euler-Maclaurin summation proves the other cases  {\it mutatis mutandis}.
\end{proof}

\begin{proof}[Proof of Proposition~\ref{Asymptotics1}]
We first note that (\ref{thookGenFcn}) implies that 
\begin{equation}\label{integralrep}
P_t(n;T)=\frac{1}{2\pi}\int_{-\pi}^{\pi}(ze^{ix})^{-n}G_t(T;ze^{ix})dx=\frac{1}{2\pi}\int_{-\pi}^{\pi}e^{g_t(T;ze^{ix})}dx,
\end{equation}
where $g_t(T;w):=\Log(w^{-n}G_t(T;w))$ for $0<|w|<1.$
To apply the  saddle point method, we must determine $z=e^{-\alpha}$ for $\alpha>0,$ such that $g_t'(T;z)=0$ (throughout we consider the derivative with respect to the second parameter). By (\ref{thookGenFcn}),  this  is equivalent to
$$
    \sum_{j=1}^{\infty}\frac{t^2j(T-1)}{T-1+e^{tj\alpha}}+\sum_{j=1}^{\infty}\frac{j}{e^{j\alpha}-1}=n.
$$
By combining \eqref{eq:a} with
\begin{equation}\label{eqtwo}
\sum_{j=1}^{\infty}\frac{j}{e^{j\alpha}-1}=\frac{\pi^2}{6\alpha^2}-\frac{1}{2\alpha}+O(1),
\end{equation}
which holds for $0<\alpha<1,$ we find that
\begin{equation}\label{alpha}
\alpha=c(T)\cdot n^{-\frac{1}{2}}-\frac{1}{4}n^{-1}+O_\eta(n^{-\frac{3}{2}}).
\end{equation}

We now estimate $g_t(T;z), g_t''(T;z),$ and $g_t'''(T;z).$  Plugging $z=e^{-\alpha}$ into $g_t(T;z)$, we obtain
$$
    g_t(T;z)=t\sum_{j=1}^{\infty}\log\left(1+(T-1)e^{-tj\alpha}\right)-\sum_{j=1}^{\infty}\log(1-e^{-j\alpha})+n\alpha.
$$
 Therefore, (\ref{eq:a0}), (\ref{eq:b}) and (\ref{alpha}) gives
\begin{equation}\label{eval2}
    g_t(T;z)=2c(T)\sqrt{n}+\frac{1}{2}\log\left(\frac{c(T)}{2\pi T\sqrt{n}}\right)+O_{\eta}\left (n^{-\frac{1}{2}}\right).
\end{equation}
Similarly, by using \eqref{eq:a} and \eqref{eq:c} we obtain
\begin{equation}\label{eqthree}
\sum_{j=1}^{\infty}\frac{j^2e^{-j\alpha}}{(1-e^{-j\alpha})^2}=\frac{\pi^2}{3\alpha^3}-\frac{1}{2\alpha^2}+O(\alpha),
\end{equation}
which implies that
\begin{align}\label{2derivative}
  g_t''(T;z)&=\left[n+\sum_{j=1}^{\infty}\frac{t^3j^2e^{-tj\alpha}}{(1+(T-1)e^{-tj\alpha})^2}
  -\sum_{j=1}^{\infty}\frac{t^2j(T-1)}{T-1+e^{tj\alpha}}
  +\sum_{j=1}^{\infty}\frac{j^2e^{-j\alpha}}{(1-e^{-j\alpha})^2}
  -\sum_{j=1}^{\infty}\frac{j}{e^{j\alpha}-1}\right]e^{2\alpha}\notag\\
  &=e^{2c(T)n^{-\frac{1}{2}}+O_{\eta}(n^{-1})}\left(\frac{2}{c(T)}n^{\frac{3}{2}}+O_{\eta}(n)\right).
\end{align}
By the same argument, thanks to (\ref{alpha}), we find that 
\begin{equation}\label{3derivative}
g_t'''(T;z)=O_{\eta}\left(\sum_{j=1}^{\infty}\frac{j^3e^{-j\alpha}}{(1-e^{j\alpha})^4}\right)=O_{\eta}(\alpha^{-4})=O_{\eta}\left (n^2\right).
\end{equation}

To complete the proof, we now let $P_t(n;T)=I+II,$ where
\begin{align*}
    I:=\frac{1}{2\pi}\int_{|x|\leq n^{-{5}/{7}}}e^{g_t(T;ze^{ix})}dx \ \ \ \ {\text {\rm and}}\ \ \ \ 
    II:=\frac{1}{2\pi}\int_{|x|> n^{-{5}/{7}}}e^{g_t(T;ze^{ix})}dx.
\end{align*}
To estimate $I,$ we use the Taylor expansion of $g_t(T;w)$ centered at the saddle point $z=e^{-\alpha}$
\begin{equation*}
    g_t(T;w)=g_t(T;z)+\frac{g_t''(T;z)(w-z)^2}{2}+O_{\eta}(g_t'''(T;z))(w-z)^3).
\end{equation*}
Since $|x|\leq n^{-5/7},$ estimate (\ref{alpha}) gives
$$w-z=ze^{ix}-z=e^{-\alpha}\left (ix+O\left (x^2\right )\right)=\left(1+O_{\eta}\left (n^{-\frac{1}{2}}\right)\right)\left(ix+O\left(n^{-\frac{10}{7}}\right)\right)=ix+O_{\eta}\left(n^{-\frac{17}{14}}\right).$$
Therefore, we obtain
\begin{equation}\label{Taylorest}
     g_t(T;w)=g_t(T;z)-\frac{g_t''(T;z)(x)^2}{2}+O_{\eta}\left(n^{-\frac{1}{7}}\right).
\end{equation}
Combining \eqref{eval2}, \eqref{2derivative}, \eqref{3derivative}, and \eqref{Taylorest}, we obtain the main term asymptotic
\begin{eqnarray}
I&=\frac{e^{g_t(T;z)}}{2\pi}\left[\int_{-\infty}^{\infty}e^{-\frac{g_t''(T;z)x^2}{2}}dx-\int_{|x|> n^{-{5}/{7}}}e^{-\frac{g_t''(T;z)x^2}{2}}dx\right]\cdot\left (1+O_{\eta}\left(n^{-\frac{1}{7}}\right)\right)\notag\\
&=\frac{c(T)}{2\sqrt{2}\pi n T^{\frac{t}{2}}}\cdot e^{c(T)\left( 2\sqrt{n}-\frac{1}{\sqrt{n}}\right)}\cdot\left(1+O_{\eta}\left(n^{-\frac{1}{7}}\right)\right).\label{Iestimate}
\end{eqnarray}

To estimate the tail error term $II,$ we  estimate $\frac{G_t(T;ze^{ix})}{G_t(T;z)}$ using
$$
e^{g_t(T;ze^{ix})}=e^{g_t(T;z)}\frac{G_t(T;ze^{ix})}{G_t(T;z)}.
$$
Since $T>0$, letting $w=ze^{ix}$ gives
\begin{align}\label{ratioest}
    \left|\frac{G_t(T;w)}{G_t(T;z)}\right|^2
       &\leq\prod_{j=1}^{\infty}\mathrm{Max}\left\{1,\left|\frac{1+(T-1)w^{j}}{1+(T-1)z^{j}}\right|^{2}\right\}\left|\frac{1-z^j}{1-w^j}\right|^2\notag\\
    &\leq \prod_{j=1}^{\infty}\mathrm{Max}\left\{1,\left(1+\frac{2z^j(1-T)(1-\cos(xj))}{(1-z^{j})^2}\right)\right\}\left(1+\frac{2z^j(1-\cos(xj))}{(1-z^j)^2}\right)^{-1}\notag\\
    &\leq \prod_{\sqrt{n}\leq j\leq 2\sqrt{n}}\mathrm{Max}\left\{1,\left(1+\frac{2z^j(1-T)(1-\cos(xj))}{(1-z^{j})^2}\right)\right\}\left(1+\frac{2z^j(1-\cos(xj))}{(1-z^j)^2}\right)^{-1}.
\end{align}
To reduce to the finite product in the last line,
 we used the fact that  for all $j\geq 1$ we have
$$\mathrm{Max}\left\{1,\left(1+\frac{2z^j(1-T)(1-\cos(xj))}{(1-z^{j})^2}\right)\right\}\left(1+\frac{2z^j(1-\cos(xj))}{(1-z^j)^2}\right)^{-1}\leq1.
$$ 

We consider two cases (i.e. $T>1$ and $T\leq1$) to estimate  \eqref{ratioest}. 
If $T>1$ and $j\in[\sqrt{n},2\sqrt{n}],$ then by  (\ref{alpha}) we have $2z^j/(1-z^j)^2\leq c_{\eta},$ for some $c_{\eta}>0.$ This implies that
\begin{equation}\label{ratioest2}
     \left|\frac{G_t(T;w)}{G_t(T;z)}\right|^2\leq \prod_{\sqrt{n}\leq j\leq 2\sqrt{n}}\left(1+c_{\eta}(1-\cos(xj))\right)^{-1}.
\end{equation}
 If $T\leq 1$, then we have 
$$\mathrm{Max}\left\{1,\left(1+\frac{2z^j(1-T)(1-\cos(xj))}{(1-z^{j})^2}\right)\right\}=1+\frac{2z^j(1-T)(1-\cos(xj))}{(1-z^{j})^2}.
$$ 
Moreover, we have $2z^j/(1-z^{j})^2\leq c_{\eta}.$ A similar calculation also shows that \eqref{ratioest2} still holds for $T\leq1$ by choosing a suitable $c_{\eta}>0.$

We divide the range of $x$ into two cases $n^{-5/7}\leq|x|\leq \frac{\pi}{2\sqrt{n}},$ and $\frac{\pi}{2\sqrt{n}}\leq|x|\leq{\pi}.$ For the first case, we can use the inequality $1-\cos(xj)\geq\frac{2}{\pi^2}(xj)^2$
to estimate \eqref{ratioest2}, giving
\begin{equation}\label{ratioest3}
    \left|\frac{G_t(T;w)}{G_t(T;z)}\right|^2\leq \prod_{\sqrt{n}\leq j\leq 2\sqrt{n}} \left(1+\frac{2c_{\eta}}{\pi^2}(xj)^2\right)^{-1}\ll e^{-c_{\eta}(x^2n^{\frac{3}{2}})}\ll e^{-c_\eta n^{\frac{1}{14}}}.
\end{equation}
In the case where $\frac{\pi}{2\sqrt{n}}\leq|x|\leq{\pi},$ we count the $j\in [\sqrt{n},2\sqrt{n}]$ for which there is an $\ell\in\Z$ with
$-n^{-\frac{1}{12}}+2\ell\pi\leq xj\leq n^{-\frac{1}{12}}+2\ell\pi.$
The total number of such $j$ is $\gg n^{1/2}+O(n^{5/12}).$ Hence, we have
\begin{equation}\label{ratioest4}
    \left|\frac{G_t(T;w)}{G_t(T;z)}\right|^2\leq  \left(1+c_{\eta}(1-\cos(n^{-\frac{1}{12}}))\right)^{-(n^{\frac{1}{2}}+O(n^{\frac{5}{12}}))}\ll e^{-c_\eta n^{\frac{1}{14}}}.
\end{equation}
 By combining \eqref{ratioest3} and \eqref{ratioest4}, we obtain the upper bound for the tail 
\begin{align*}
    II\ll\frac{1}{2\pi}\int_{|x|> n^{-{5}/{7}}}e^{g_t(T;z)}\left|\frac{G_t(T;ze^{ix})}{G_t(T;z)}\right|dx\ll_{_\eta}e^{-\frac{c(T)}{\sqrt{n}}-\frac{c_{\eta}}{2}\cdot n^{\frac{1}{14}}}.
\end{align*}
As $P_t(n;T)=I+II,$ the proposition follows from this inequality and (\ref{Iestimate}).

\end{proof}

\subsection{Proof of Proposition~\ref{Asymptotics2}}
For each positive integer $n$,  (\ref{tmulthookGenFcn}) implies that 
\begin{equation}\label{integralrep2}
\widehat{P}_t(n;T_n)=\frac{1}{2\pi}\int_{-\pi}^{\pi}(ze^{ix})^{-n}\widehat{G}_t(T_n;ze^{ix})dx=\frac{1}{2\pi}\int_{-\pi}^{\pi}e^{\widehat{g}_t(T_n;ze^{ix})}dx,
\end{equation}
where $\widehat{g}_t(T_n;w):=\Log(w^{-n}\widehat{G}_t(T_n;w))$ for $0<|w|<1.$ 
We aim to locate the saddle point $z=e^{-\beta_n},$ with $\beta_n>0.$ To this end, we solve
$$
  -\sum_{j=1}^{\infty}\frac{t^2j}{e^{tj\beta_n}-1}+\sum_{j=1}^{\infty}\frac{t^2jT_n^j}{e^{tj\beta_n}-T_n^j}+\sum_{j=1}^{\infty}\frac{j}{e^{j\beta_n}-1}=n.
$$
By \eqref{eqtwo} and the definition of $\alpha(T)$ and $\varepsilon_T(n)$, we obtain 
we find that
\begin{equation}\label{beta}
\beta_n=\left (\frac{\pi}{\sqrt{6}}+ \frac{\alpha(T)+\varepsilon_T(n)}{t}\right) \cdot n^{-\frac{1}{2}}+O_T\left(n^{-1}\right).
\end{equation}
Since we have $\varepsilon_T(n)=o_T(1),$ it follows that
\begin{equation}\label{beta}
\beta_n =\left (\frac{\pi}{\sqrt{6}}+\frac{\alpha(T)}{t} \right) \cdot n^{-\frac{1}{2}} +o_T\left(n^{-\frac{1}{2}}\right).
\end{equation}
We now estimate $\widehat{g}_t(T_n;z),~\widehat{g}_t''(T_n;z),$ and $\widehat{g}_t'''(T_n;z).$  Plugging $z=e^{-\beta_n}$ into $\widehat{g}_t(T_n;z)$, we obtain
$$
    \widehat{g}_t(T_n;z)=t\sum_{j=1}^{\infty}\log\left (1-e^{-tj\beta_n}\right)-t\sum_{j=1}^{\infty}\log\left(1-T_n^je^{-tj\beta_n}\right)-\sum_{j=1}^{\infty}\log\left(1-e^{-j\beta_n}\right)+n\beta_n.
$$
 Applying (\ref{eq:a0}) to all three terms gives
\begin{equation}\label{eval2b}
    \widehat{g}_t(T_n;z)=\frac{t\pi^2}{6(t\beta_n-\log T_n)}+\frac{1}{2}\log\left(\frac{\beta_n}{2\pi }\right)+\frac{t}{2}\log\left(\frac{t\beta_n-\log T_n}{t\beta_n }\right)+n\beta_n+O_{T}(\beta_n).
\end{equation}
Similarly, by using \eqref{eqtwo} and \eqref{eqthree} we obtain
\begin{align}\label{2derivative2b}
  \widehat{g}_t''(T_n;z)&=\left[n+\frac{\pi^2t^3}{3(\beta_n t-\log T_n)^3}+\frac{t-1}{2\beta_n^2}-\frac{(t^3+\pi^2t^2)}{2(\beta_n t-\log T_n)^2}+\frac{1-t}{2\beta_n}+\frac{t^2}{2(\beta_n t-\log T_n)} + O_{T}(\beta_n)\right]e^{2\beta_n}.
\end{align}
By the same argument, thanks to (\ref{beta}),  we find that 
\begin{equation}\label{3derivative2b}
\widehat{g}_t'''(T_n;z)=O_{T}\left((\beta_n t-\log T_n)^{-4}\right)=O_{T}\left(\beta_n^{-4}\right).
\end{equation}

Arguing as in the proof of Proposition~\ref{Asymptotics1} with \eqref{beta}, \eqref{eval2b}, \eqref{2derivative2b}, and \eqref{3derivative2b}, we obtain
\begin{align*}
&\widehat{P}_t(n;T_n)=\frac{e^{\widehat{g}_t(T_n;z)}}{2\pi}\cdot\int_{-\infty}^{\infty}e^{-\frac{\widehat{g}_t''(T_n;z)x^2}{2}}dx\cdot(1+O_{T}(n^{-\frac{1}{7}}))
=\frac{e^{\widehat{g}_t(T_n;z)}}{2\pi}\cdot\sqrt{\frac{2\pi}{|\widehat{g}_t''(T_n;z)|}}\cdot\left(1+O_{T}\left(n^{-\frac{1}{7}}\right)\right)\\
&\ \ = \frac{1}{2^{\frac{7}{4}}3^{\frac{1}{4}}n}\cdot\sqrt{\frac{1}{\sqrt{6}}+\frac{\gamma(T)+\alpha_T(n)}{\pi t}}\left(\frac{\pi t}{\pi t+\sqrt{6}\left(\alpha(T)+\varepsilon_T(n)\right)}\right)^{\frac{t}{2}}\cdot e^{\pi\sqrt{n}\left(\sqrt{\frac{2}{3}}+\frac{\alpha(T)+\varepsilon_T(n)}{\pi t}\right)}\cdot\left (1+O_{T}\left (n^{-\frac{1}{7}}\right) \right).
\label{Iestimate}
\end{align*}
This completes the proof of the proposition.

\section{Proofs of Theorems~\ref{MainTheorem1} and \ref{MainTheorem2}}\label{Proofs}

We prove Theorems~\ref{MainTheorem1} and \ref{MainTheorem2} using the
method of moments, where the crucial device is the following classical theorem of Curtiss.

\begin{theorem}[Theorem 2 of \cite{Curtiss}]
Let $\left\{X_n\right\}$ be a sequence of  real random variables. Then define the corresponding moment generating function
$$
M_{X_n}(r):=\int_{-\infty}^{\infty}e^{rx}dF_{n}(x),
$$
where $F_{n}(x)$ is the cumulative distribution function associated with $X_n.$ If the sequence $\left\{M_{X_n}(r)\right\}$ converges pointwise on a neighborhood of $r = 0,$ then $\left\{X_n\right\}$ converges in distribution.
\end{theorem}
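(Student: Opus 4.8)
The plan is to prove the statement by a complex-analytic continuation argument that upgrades the assumed convergence on a real neighborhood of $0$ to convergence of characteristic functions, after which Lévy's continuity theorem yields convergence in distribution. Shrinking if necessary, write $(-\delta,\delta)$ for a symmetric interval on which $\{M_{X_n}(r)\}$ converges. Since a convergent sequence of real numbers has finite terms, $M_{X_n}(r)=\int_{-\infty}^\infty e^{rx}\,dF_n(x)<\infty$ for every $r\in(-\delta,\delta)$ and every $n$. I would then invoke the standard fact that finiteness of this integral on an interval forces $z\mapsto\int_{-\infty}^\infty e^{zx}\,dF_n(x)$ to be holomorphic on the vertical strip $S:=\{z\in\C:|\mathrm{Re}(z)|<\delta\}$ (differentiate under the integral and apply Morera's theorem). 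Thus each $M_{X_n}$ extends holomorphically to $S$, and on the imaginary axis it specializes to the characteristic function $\phi_{X_n}(t)=M_{X_n}(it)$.

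First I would establish that the family $\{M_{X_n}\}$ is locally uniformly bounded on $S$. For $z=a+ib$ with $|a|\le\delta'<\delta$, the elementary estimate $|e^{(a+ib)x}|=e^{ax}$ together with positivity of $dF_n$ gives
\[
|M_{X_n}(a+ib)|\le\int_{-\infty}^\infty e^{ax}\,dF_n(x)=M_{X_n}(a)\le M_{X_n}(\delta')+M_{X_n}(-\delta'),
\]
where the final inequality uses convexity of $r\mapsto M_{X_n}(r)$ on the real line. Because $\{M_{X_n}(\pm\delta')\}$ are convergent, hence bounded, sequences in $n$, the bound is uniform on the closed substrip $|\mathrm{Re}(z)|\le\delta'$, so $\{M_{X_n}\}$ is a normal family on $S$.

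Next I would apply the Vitali--Porter theorem: a locally uniformly bounded sequence of holomorphic functions on a domain that converges pointwise on a subset having a limit point in the domain converges locally uniformly there. Here convergence holds on the real segment $(-\delta',\delta')\subset S$, which has limit points in $S$, so $\{M_{X_n}\}$ converges locally uniformly on all of $S$ to a holomorphic limit $\Phi$. Evaluating on the imaginary axis yields $\phi_{X_n}(t)=M_{X_n}(it)\to\Phi(it)=:\psi(t)$ for every $t\in\R$. Since $\Phi$ is holomorphic, $\psi$ is continuous, and $\psi(0)=\Phi(0)=\lim_n M_{X_n}(0)=1$. By Lévy's continuity theorem, pointwise convergence of characteristic functions to a limit continuous at $0$ forces $\{X_n\}$ to converge in distribution, which is the claim.

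The main obstacle is precisely the passage from the real to the imaginary direction: mere numerical convergence on a real interval has no direct probabilistic meaning, and it is the holomorphy of the extended moment generating functions, combined with the normal-family/Vitali mechanism, that transports this information onto the imaginary axis. This is exactly where the hypothesis of convergence on an \emph{interval}, rather than at isolated points, is indispensable. A secondary point requiring care is that the limit $\Phi$ is not \emph{a priori} known to be a moment generating function; the argument avoids this difficulty by extracting only the characteristic-function limit $\psi$ and feeding it into Lévy's theorem, so that the limiting law is produced rather than assumed.
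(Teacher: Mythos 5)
Your proof is correct. Note that the paper itself gives no proof of this statement: it is quoted verbatim as Theorem 2 of the cited 1942 paper of Curtiss, so there is nothing internal to compare against. Your argument—extend each $M_{X_n}$ holomorphically to the strip $\lvert \mathrm{Re}(z)\rvert<\delta$, use the bound $\lvert M_{X_n}(a+ib)\rvert\le M_{X_n}(a)$ together with convexity and convergence at $\pm\delta'$ to get a normal family, apply Vitali--Porter to transport real-axis convergence to the imaginary axis, and finish with L\'evy's continuity theorem—is essentially Curtiss's own classical proof, and all the delicate points (local uniform boundedness, continuity of the limit at $0$, and not assuming the limit is itself a moment generating function) are handled correctly.
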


\begin{proof}[Proof of Theorem~\ref{MainTheorem1}]
For each $n\geq 1,$  we consider the $r$th power moment 
\begin{equation}\label{mgf1}
M(Y_t(n); r):=\frac{1}{p(n)}\sum_{m=0}^{\infty}p_t(m;n)\cdot e^{\frac{(m-\mu_t(n))r}{\sigma_t(n)}}.
\end{equation}
By Curtiss's Theorem, combined with the  theory of normal distributions,  it suffices to show that
\begin{equation}\label{Thm1Criterion}
\lim_{n\rightarrow +\infty}M(Y_t(n); r)=e^{\frac{r^2}{2}}.
\end{equation}

By evaluating  $P_t(n;T)$ at $T=1$ (i.e. $P_t(n;1)=p(n)$) and $e^{\frac{r}{\sigma_t(n)}},$ we have
$$
M(Y_t(n); r)=\frac{P_t(n;e^{\frac{r}{\sigma_t(n)}})}{p(n)}\cdot e^{-\frac{\mu_t(n)}{\sigma_t(n)}r}.
$$
Proposition \ref{Asymptotics1} gives
\begin{equation}\label{mgfest}
M(Y_t(n); r)=\frac{c(e^{\frac{r}{\sigma_t(n)}})\cdot\left(1+O_{\eta}\left(n^{-\frac{1}{7}}\right)\right)}{c(1)\cdot\left(1+O_{}\left(n^{-\frac{1}{7}}\right)\right)}\cdot e^{-\frac{t}{2\sigma_t(n)}r-\frac{\mu_t(n)}{\sigma_t(n)}r+(2n^{\frac{1}{2}}-n^{-\frac{1}{2}})\cdot(c(e^{\frac{r}{\sigma_t(n)}})-c(1))}.
\end{equation}
Since $e^{\frac{r}{\sigma_t(n)}}> 0$ and $e^{\frac{r}{\sigma_t(n)}}\rightarrow1,$ as $n\rightarrow\infty,$ the implied constant can be chosen to be independent of $\eta.$ 
By direct calculation of the dilogarithm function, we find that
$c(1)=\pi/\sqrt{6},$ and 
$$
c(e^{\frac{r}{\sigma_t(n)}})=\frac{\pi}{\sqrt{6}}+\sqrt{\frac{3}{2}}\frac{1}{\pi}\left(\frac{r}{\sigma_t(n)}\right)+\sqrt{\frac{3}{2}}\frac{(\pi^2-6)}{4\pi^3}\left(\frac{r^2}{\sigma_t^2(n)}\right)+O\left(\frac{r^3}{\sigma_t^3(n)}\right).
$$
Therefore, \eqref{mgfest} becomes
\begin{align*}
M(Y_t(n); r)
&=\left(1+O_r(n^{-\frac{1}{7}})\right)\cdot e^{\left(-\frac{t}{2}-\mu_t(n)+\frac{\sqrt{6n}}{\pi}\right)\frac{r}{\sigma_t(n)}+\sqrt{6n}\left(\frac{\pi^2-6}{4\pi^3}\right)\left(\frac{r^2}{\sigma_t^2(n)}\right)+O_r\left(n^{-\frac{3}{4}}\right)}\\
&=\left(1+O_r(n^{-\frac{1}{7}})\right)\cdot e^{\frac{r^2}{2}+o_r(1)}.
\end{align*}
Letting $n\rightarrow +\infty,$ we obtain (\ref{Thm1Criterion}) confirming Theorem~\ref{MainTheorem1}.
\end{proof}
\begin{proof}[Proof of Theorem~\ref{MainTheorem2}]
To prove Theorem~\ref{MainTheorem2}, we recall that if $k>1$ and $r<1/\theta,$ then the moment generating function for the random variable
$X_{k,\theta}$ is (for example, see II.2 of \cite{Feller})
$$
M(X_{k,\theta}; r)=\frac{1}{(1-\theta r)^k}.
$$
This distribution has mean $\mu_{k,\theta}=k\theta$, mode $\mo_{k,\theta}=(k-1)\theta$, and variance $\sigma^2_{k,\theta}=k\theta^2.$
If $a$ and $b$ are real, then the shifted Gamma distribution
$a X_{k,\theta} +b$ has moment generating function
$$
M(aX_{k,\theta}+b;r)= e^{br}\cdot M(X_{k,\theta},ar)=\frac{e^{br}}{(1-\theta ar)^k},
$$
and has mean $ak\theta+b$, mode $a(k-1)\theta+b$, and
variance $a^2k\theta^2.$
We compare $\widehat{Y}_t(n)$ with
$a X_{k,\theta}+b,$ where $(k,\theta):=\left(\frac{t-1}{2},\sqrt{\frac{2}{t-1}}\right),$
 and 
$a:=-1$ and $b:=\sqrt{2(t-1)}/2.$ Therefore, we assume that $(t-1)/2 >1$, which is equivalent to $t\geq 4.$

To apply Curtiss's theorem, we compute the moment generating function as in (\ref{mgf1}), with the
 claimed mean $\widehat{\mu}_t(n)\sim n/t-(t-1)\sqrt{6n}/2\pi t,$ and variance $\widehat{\sigma}_t(n)\sim \sqrt{3(t-1)n}/\pi t.$ 
Applying Proposition~\ref{Asymptotics2} with $\alpha(T):={\pi t r}/{\sqrt{3(t-1)}}$ and $T_n:=e^{\frac{\alpha(T)}{\sqrt{n}}}$, we find  that
\begin{align*}
M(\widehat{Y}_t(n); r)&= \frac{(2^{\frac{7}{4}}3^{\frac{1}{4}}n)^{-1}\cdot\sqrt{\frac{1}{\sqrt{6}}+\frac{r}{\sqrt{3(t-1)}}}\cdot\left(1+\sqrt{\frac{2}{t-1}}r\right)^{-\frac{t}{2}}\cdot\left(1+O_r(n^{-\frac{1}{7}})\right)}{(4\sqrt{3}n)^{-1}\cdot\left(1+O(n^{-\frac{1}{7}})\right)}\cdot e^{\frac{n}{t\widehat{\sigma}_t(n)}r-\frac{\widehat{\mu}_t(n)}{\widehat{\sigma}_t(n)}r}.\\
&=\frac{e^{\frac{\sqrt{2(t-1)}}{2}r}}{\left(1+\sqrt{\frac{2}{t-1}}r\right)^{\frac{t-1}{2}}}\cdot\left(1+O_r(n^{-\frac{1}{7}})\right).
\end{align*} 
Therefore, Curtiss's theorem gives $\widehat{Y}_t(n)\sim a\widehat \sigma_t(n)X_{k,\theta}+b\widehat \sigma_t(n)+\widehat \mu_t(n),$ as well as the claimed mean, mode and variance.
To obtain claim (2), we recall that if
 $k>1,$ then the Gamma distribution $X_{k,\theta}$  has cumulative distribution function (e.g. II.2 of \cite{Feller})
$D_{k,\theta}(x)=\gamma\left(k;\frac{x}{\theta}\right)/\Gamma(k),$
where $\gamma(\alpha;x)$ is the lower incomplete Gamma function. 
\end{proof}

\end{document}